\documentclass[5p,authoryear]{article}

\usepackage{hyperref}
\hypersetup{
	colorlinks=true,
	linkcolor=blue,
	filecolor=blue,      
	urlcolor=black,
	citecolor=blue,
	anchorcolor=black,
	menucolor=red
}
\usepackage{dsfont}
\usepackage{amsmath}
\usepackage{amsthm}
\usepackage{amssymb}
\usepackage{mathrsfs}
\usepackage[english]{babel}
\usepackage{comment}
\usepackage{enumerate}
\usepackage{geometry}
\usepackage{pythonhighlight}
\usepackage{graphicx}
\usepackage{fancyhdr}
\usepackage{url}
\usepackage{float}
\usepackage{booktabs}
\usepackage[all]{xy}
\usepackage{color}
\usepackage{natbib}
\usepackage{hyperref}
\usepackage{authblk}
\pdfstringdefDisableCommands{
	
}

\newcommand{\ud}{\mathrm{d}}

\newcommand{\X}{\mathcal {X}}

\newcommand{\E}{\mathbb{E}}

\newcommand{\R}{\mathbb{R}}

\newcommand{\p}{\mathbb{P}}
\newcommand{\Q}{\mathbb{Q}}

\newcommand{\M}{\mathcal{M}}

\renewcommand{\geq}{\geqslant}
\renewcommand{\leq}{\leqslant}
\renewcommand{\epsilon}{\varepsilon}

\newcommand\keywords[1]{\textbf{Keywords}: #1}
\theoremstyle{plain}
\newtheorem{theorem}{Theorem}

\newtheorem{lemma}{Lemma}

\theoremstyle{definition}

\newtheorem{assumption}{Assumption}

\theoremstyle{remark}
\newtheorem{remark}{Remark}
\newtheorem{problem}{Problem}

\begin{document}
\title{ Monotone Mean-Variance Portfolio Selection in  Semimartingale Markets: Martingale Method }

\author[a]{Yuchen Li}
\author[a]{Zongxia Liang}
\author[b]{Shunzhi Pang}
\affil[a]{\small{Department of Mathematical Sciences, Tsinghua University, Beijing 100084, China}}
\affil[b]{\small{School of Economics and Management, Tsinghua University, Beijing 100084, China}}
  \maketitle

		\begin{abstract}

   We use the martingale method to discuss the relationship between mean-variance (MV) and monotone mean-variance (MMV) portfolio selections. We propose a unified framework to discuss the relationship in general financial markets without any specific setting or completeness requirement. We apply this framework to a semimartingale market and find that MV and MMV are consistent if and only if the variance-optimal signed martingale measure keeps non-negative. Further, we provide an example to show the application of our result. 
		\end{abstract}
  \keywords{Martingale method; Monotone mean-variance;   Portfolio selection;   Semimartingale market}
		

%


	\section{Introduction}
	
	Since \citet{markowits1952portfolio}  earliest establishes the mean-variance (MV) portfolio selection criterion, it has been widely applied in both theoretical research and practical investment. However, due to the well-known drawback of non-monotonicity (see  \citet{dybvig1982mean}), portfolio selection based on MV preferences may violate the economic rationality and even lead to the free cash flow stream (FCFS) problem. To conquer such disadvantage,  \citet{maccheroni2009portfolio} propose the monotone mean-variance (MMV) preference as a revision, which is the best approximation of MV satisfying the monotonicity property.
	
	In recent years, portfolio selection based on MMV preferences is becoming widely concerned. On the one hand, much literature focuses on the relationship between MV and MMV. By the dynamic programming method, \citet{trybula2019continuous} first solve the continuous-time MMV portfolio selection in a market with a stochastic factor and find that solutions to MV and MMV problems are actually the same. Then, \citet{SL2020note} and \citet{du2023monotone} take an analytical approach to generalize such consistency of MV and MMV to any market with continuous asset prices, regardless of whether there are trading constraints. Thus, discontinuity should be a necessary condition for MV and MMV to be different. By adding L\'evy jumps into \citet{trybula2019continuous}'s model, \citet{li2022comparison} solve the MMV investment problem in a discontinuous market for the first time and propose a sufficient and necessary condition for MV and MMV to be inconsistent. Finally, in general semimartingale financial markets, \citet{cerny2020semimartingale} constructs a truncated quadratic utility function to connect MV and MMV and provides several equivalent conditions for them to be consistent. 
	
	On the other hand, the literature is also devoted to obtaining an explicit form of solutions to MMV problems, as such result can be applied into practice directly. For instance, \citet{li2024MMVPS} consider a standard jump-diffusion model and solve the MMV portfolio selection explicitly. They find that MMV can fix the non-monotonicity and FCFS problems of MV when the jump size can be larger than the inverse of the market price of risk. So far, however, almost all the literature on solving continuous-time MMV problems follows the dynamic programming method proposed by \citet{trybula2019continuous} (also see \citet{LG2021RAIRO-OR}, \citet{li2023optimal}, \citet{shen2022cone}, \citet{hu2023constrained}). However, the martingale method, as another classical way widely applied to deal with stochastic control problems, has been neglected. 
	
	In this paper, we fill such theoretical gap and use the perspective of martingale method to discuss the relationship between MV and MMV for the first time. First, we show the application of the martingale method and prove that MV and MMV are consistent in any complete financial market. Then, we propose a unified framework to discuss the relationship in general financial markets without any specific setting or requirement of completeness. For the MV investment problem, it has been widely discussed in the literature, see \citet{XYincomplete2006} for instance. For the MMV problem, we assume that the investor calculates its utility by choosing a particular pricing kernel, which gives an upper bound of the value function of MMV problem. If there exists a proper pricing kernel such that the upper bound coincides with the value function of MV problem, then MV and MMV problems share the same solution. If such pricing kernel does not exist, the value functions of MV and MMV problems would have a gap greater than 0, and therefore the two solutions are different. We apply this framework to a semimartingale market. We give out the solution to MMV problem and conclude that MV and MMV are consistent if and only if the variance-optimal signed martingale measure (VSMM) keeps non-negative. Finally, we give an example to show the validity and practicability of our method. 
	
	To summarize, we contribute to the literature by providing a new perspective to deal with MMV portfolio selection problems. While almost all the literature on MMV adopt the dynamic programming method proposed by \citet{trybula2019continuous}, we apply the martingale method for the first time. Such method is not only mathematically simpler, but also makes more sense economically. We believe that it can be universally applied to most of other MV and MMV problems with different market settings. \citet{cerny2020semimartingale} is the closest to this paper, as we come to almost the same conclusion. It takes the method of supremal convolution to transform the MMV problem into a truncated quadratic utility investment problem. Such complicated mathematical technique partly covers up the economic nature of MMV. Instead, using the martingale method, we can clearly observe that taking a signed measure to discount the investor's wealth process is the fundamental reason for the non-monotonicity of MV, consistent with \citet{li2024MMVPS}. Our method is also more general and can be used in different financial markets. 
	
	The remainder of the paper is organized as follows. Section \ref{sec com} takes the martingale method to prove the consistency of MV and MMV in any complete market. Section \ref{sec incom} establishes a unified framework in general markets based on the same perspective and apply the framework to a semimartingale market. A specific example is also given out to show the application of our result. Finally, Section \ref{sec conclusion} concludes the paper.
	
	\section{Complete Market}\label{sec com}
	
	In this section, we first show the validity of the martingale method in a complete financial market, which leads to a better understanding of further discussions on general markets. 
	
	Suppose that the financial market is arbitrage free. Based on the fundamental theorem of asset pricing, there exists as least one risk neutral probability measure that is equivalent to the physical measure. When the market is complete, the risk neutral measure is unique and a strictly positive stochastic discounted factor (which is also known as the pricing kernel) can be constructed. By the classical martingale method of asset pricing, the investor's final stage wealth $X^{\pi}_T$ under an admissible self-financing strategy $\pi$ should satisfy 
	\begin{equation}
		x_0 = \mathbb{E} \left[ M X^{\pi}_T\right], \label{Wealth in Complete Market}
	\end{equation}
	where $x_0$ is the investor's initial endowment and $M$ denotes the pricing kernel at time $T$. Conversely, a claim $X^{\pi}_T$ satisfying Equation \eqref{Wealth in Complete Market} can be replicated by an admissible strategy and we denote as $\pi\in\mathcal{A}$. Generally, we keep the following assumption in this section:
	\begin{assumption}\label{assump complete}
		There exists a unique pricing kernel $M>0$ with $\E\left[M^2\right]<\infty$. 
	\end{assumption}
	
	\subsection{MV Problem in Complete Market}\label{subsec com MV}
	
	First, we derive the solution to the MV investment problem in the complete market: 
	\begin{problem}[MV problem in complete market]\label{prob com MV 1}
		\begin{displaymath}
			\left\{\begin{aligned}
				&\max_{X^{\pi}_T}\  U_\theta(X^{\pi}_T),  \\
				&\text{subject to }\mathbb{E} \left[ M X^{\pi}_T\right]=x_0, 
			\end{aligned}\right.
		\end{displaymath}	
		where $ U_\theta(X^{\pi}_T)=\E\left[X^{\pi}_T\right]-\frac{\theta}{2}\mathrm{Var}(X^{\pi}_T)$, and $\theta$ is the uncertainty averse coefficient. 
	\end{problem}
	
	Although the problem is widely studied in the literature, we restate the result here for the purpose of completeness. We embed the problem into auxiliary problems to simplify the problem (see, e.g., Chapter 6 in \citet{ZXY1999stochastic}, or \citet{trybula2019continuous}, \citet{li2022comparison}). We add an extra condition $\E\left[X^{\pi}_T\right]=d$, then use the Lagrange multiplier method: 
	\begin{displaymath}
		\max_{X^{\pi}_T} \Bigg\{d - \frac{\theta}{2} \mathbb{E} \Big( X^{\pi}_T - d \Big)^2  + \lambda_1 \left( x_0 - \mathbb{E} \left[ M X^{\pi}_T \right] \right) + \lambda_2 \Big(\mathbb{E} \left[ X^{\pi}_T\right] - d \Big) \Bigg\},
	\end{displaymath}
	where $\lambda_1$ and $\lambda_2$ are corresponding Lagrangian multipliers. 
	
	Solving the first order condition with respect to $X^{\pi}_T$, we have
	$X^{\pi^*}_T(d) = d + \frac{1}{\theta} \left( \lambda_2 - \lambda_1 M \right).$
	Taking the expectation, we have $\lambda_2 = \lambda_1 \mathbb{E} \left[ M \right]$.
	As 
	\begin{displaymath}
		\begin{aligned}
			x_0 =& \mathbb{E} \left[ M X^{\pi^*}_T(d) \right]  \\
			=& \mathbb{E} \left\{ M \left[d + \frac{\lambda_1}{\theta}\left(\mathbb{E} \left[ M \right] - M\right) \right] \right\} \\
			= &d \mathbb{E} \left[ M \right] - \frac{\lambda_1}{\theta} \mathrm{Var}\left[ M \right],
		\end{aligned}
	\end{displaymath}
	
	we have $\lambda_1 = \frac{\theta \left( d \mathbb{E} \left[ M \right] - x_0 \right) }{\mathrm{Var}\left[ M \right]}$
	and thus 
	\begin{displaymath}
		X^{\pi^*}_T(d) = d + \frac{\left( d \mathbb{E} \left[ M \right] - x_0 \right) \left( \mathbb{E} \left[ M \right] - M  \right) }{\mathrm{Var}\left[ M \right]}.
	\end{displaymath}
	For simplicity, we denote $\mathbb{E} \left[ M \right] = m_1$, $\mathrm{Var}\left[ M \right] = m_2$. 
	
	For the original Problem \ref{prob com MV 1}, we just solve the optimization over $d$:
	\begin{displaymath}
		\max_{d} U_\theta(X^{\pi^*}_T(d))=\max_{d} \Bigg\{ d - \frac{\theta}{2} \frac{\left(d m_1 - x_0\right)^2  }{m_2 }\Bigg\}.
	\end{displaymath}
	Simple computation yields $d^{\ast} = \frac{x_0}{m_1} + \frac{m_2}{\theta m_1^2 }$,  then
	\begin{equation}\label{MV Optimal Wealth in Complete} X^{\pi^{\ast}}_T=X^{\pi^{\ast}}_T(d^*) = \frac{x_0}{m_1} + \frac{m_2}{\theta m_1^2 } + \frac{m_1 -  M}{\theta m_1 }.
	\end{equation}
	
	As the domain of monotonicity of MV preference is  $\mathcal{G}_{\theta}=\left\{X\in L^2:X-\E\left[X\right]\leq \frac{1}{\theta}\right\}$ (see \citet{maccheroni2009portfolio} for more details), it can be verified that the optimally invested wealth given by Equation \eqref{MV Optimal Wealth in Complete} keeps within the monotone domain, i.e.,
	\begin{displaymath}
		X^{\pi^{\ast}}_T- \mathbb{E} \left[X^{\pi^{\ast}}_T \right] = \frac{m_1 - M }{\theta m_1 } < \frac{1}{\theta},
	\end{displaymath}
	due to the strict positivity of $M$. 
	
	Finally, the maximum MV utility gained by the investor is
	\begin{equation} \label{eq MV value function}
		U_\theta\left(X^{\pi^{\ast}}_T \right) = \frac{x_0}{m_1} + \frac{m_2}{2 \theta m_1^2 }.
	\end{equation}
	
	\subsection{MMV Problem in Complete Market} \label{Subsec com MMV}
	
	Recall that the MMV preference (see \citet{maccheroni2009portfolio} for more details) is given by
	\begin{displaymath}
		V_{\theta}(X)=\inf_{\mathbb{Q} \in \Delta^2(\mathbb{P})} \left\{ \mathbb{E^{Q}}[X]+\frac{1}{2\theta}\mathbb{C}(\mathbb{Q} \parallel \mathbb{P}) \right\}, \quad X \in L^2(\Omega,\mathcal{F},\mathbb{P}), \label{MMV Definition}
	\end{displaymath}
	where $\mathbb{Q} \in \Delta^2(\mathbb{P}) \triangleq \left\{\mathbb{Q}:\mathbb{Q}(\Omega)=1, \mathbb{E}^\mathbb{P} \left[{\left(\frac{\mathrm{d}\mathbb{Q}}{\mathrm{d}\mathbb{P}}\right)}^2\right] < \infty \right\}$ and
	\begin{displaymath}
		\mathbb{C}(\mathbb{Q} \parallel \mathbb{P}) \triangleq \left\{
		\begin{aligned}
			& \mathbb{E^\mathbb{P}}\left[{\left(\frac{\mathrm{d}\mathbb{Q}}{\mathrm{d}\mathbb{P}}\right)}^2 \right]-1, & \quad \text{if} \quad \mathbb{Q} \ll \mathbb{P}, \\
			& \infty, & \quad \text{otherwise}.
		\end{aligned} \right. \notag
	\end{displaymath}
	By the martingale method, the MMV investment problem in the complete market is: 
	\begin{problem}[MMV problem in complete market] \label{prob com MMV 1}
		\begin{displaymath}
			\left\{\begin{aligned}
				&\max_{X^{\pi}_T }\  V_\theta(X^{\pi}_T)=\max_{X^{\pi}_T }\inf_{\mathbb{Q} \in \Delta^2(\mathbb{P})}\mathbb{E}^{\mathbb{Q}} \left\{ X^{\pi}_T + \frac{1}{2\theta} \frac{\mathrm{d}\mathbb{Q}}{\mathrm{d}\mathbb{P}} - \frac{1}{2\theta}\right\} ,\\
				&\text{subject to }\mathbb{E} \left[ M X^{\pi}_T\right]=x_0.
			\end{aligned}\right. 
		\end{displaymath}
	\end{problem}
	
	We have the following theorem of the relationship between MV and MMV investment problems in any complete market: 
	\begin{theorem}
		In any complete market (with Assumption \ref{assump complete}), solutions to MV Problem \ref{prob com MV 1} and MMV Problem \ref{prob com MMV 1} are the same.
	\end{theorem}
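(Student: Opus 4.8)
The plan is to sandwich the optimal value of the MMV problem between two copies of the optimal value of the MV problem, using the pricing kernel $M$, which in a complete market is (after normalisation) the density of the unique equivalent martingale measure. The starting point is a variational identity for the MV functional: completing the square gives
\[
	U_\theta(X)=\inf\Big\{\E[\zeta X]+\tfrac{1}{2\theta}\big(\E[\zeta^2]-1\big):\ \zeta\in L^2,\ \E[\zeta]=1\Big\},
\]
the infimum being attained at $\zeta^{\ast}=1-\theta\big(X-\E[X]\big)$. Since $\Delta^2(\p)$ consists exactly of the absolutely continuous measures whose densities $\zeta$ satisfy these two conditions \emph{and} $\zeta\ge 0$, restricting the infimum yields $V_\theta(X)\ge U_\theta(X)$ for every $X\in L^2$, with equality as soon as $\zeta^{\ast}\ge0$, i.e.\ as soon as $X\in\mathcal G_\theta$; this coincidence on the monotonicity domain is the fact recorded in \citet{maccheroni2009portfolio}.

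Next I would produce the matching upper bound. Let $\Q^{\ast}$ be the measure with $\frac{\ud\Q^{\ast}}{\ud\p}=M/m_1$. By Assumption \ref{assump complete} and $M>0$, $\Q^{\ast}\in\Delta^2(\p)$, and the pricing relation \eqref{Wealth in Complete Market} gives $\E^{\Q^{\ast}}[X^{\pi}_T]=x_0/m_1$ for every admissible $\pi$. Using $\Q^{\ast}$ as a (generally suboptimal) competitor in the infimum defining $V_\theta$, a one-line computation with $\E[M^2]=m_2+m_1^2$ shows
\[
	V_\theta(X^{\pi}_T)\ \le\ \E^{\Q^{\ast}}[X^{\pi}_T]+\frac{1}{2\theta}\Big(\frac{\E[M^2]}{m_1^2}-1\Big)=\frac{x_0}{m_1}+\frac{m_2}{2\theta m_1^2}
\]
for every admissible $\pi$, and the right-hand side is precisely the optimal MV value \eqref{eq MV value function}. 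On the other hand, the MV optimizer $X^{\pi^{\ast}}_T$ in \eqref{MV Optimal Wealth in Complete} was shown in Section \ref{subsec com MV} to lie in $\mathcal G_\theta$, so by the first paragraph $V_\theta(X^{\pi^{\ast}}_T)=U_\theta(X^{\pi^{\ast}}_T)=\frac{x_0}{m_1}+\frac{m_2}{2\theta m_1^2}$. Hence $X^{\pi^{\ast}}_T$ is optimal for the MMV problem and the two value functions coincide.

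It remains to check that no other admissible claim is MMV-optimal. If $\tilde X=X^{\pi}_T$ attains the MMV value, the displayed bound must be an equality at $\tilde X$, so $\Q^{\ast}$ attains the infimum defining $V_\theta(\tilde X)$; strict convexity of $\zeta\mapsto\E[\zeta\tilde X]+\tfrac{1}{2\theta}\E[\zeta^2]$ makes $\Q^{\ast}$ the \emph{unique} minimizer over $\{\zeta\in L^2:\E[\zeta]=1,\ \zeta\ge0\}$. Since its density $M/m_1$ is strictly positive, the constraint $\zeta\ge0$ is inactive, so $M/m_1$ also minimizes over the full affine set $\{\zeta\in L^2:\E[\zeta]=1\}$; by the variational identity of the first paragraph this forces $M/m_1=1-\theta(\tilde X-\E[\tilde X])$, and solving this jointly with $\E[M\tilde X]=x_0$ recovers exactly \eqref{MV Optimal Wealth in Complete}. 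I expect the genuinely delicate step to be this last implication — upgrading a constrained minimizer with an a.s.\ strictly positive density to an unconstrained minimizer — which requires a careful complementary-slackness argument in $L^2$ since $\essinf(M/m_1)$ need not be positive; everything else is bookkeeping with \eqref{Wealth in Complete Market}, \eqref{MV Optimal Wealth in Complete}, \eqref{eq MV value function}, and the $\mathcal G_\theta$ identity.
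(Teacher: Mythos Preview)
Your sandwich argument—bounding $V_\theta(X^\pi_T)$ above via the measure $\Q^\ast$ with density $M/m_1$ and below by $U_\theta\le V_\theta$, then matching both ends to the MV optimum \eqref{eq MV value function}—is exactly the paper's proof. Your third paragraph on uniqueness of the MMV optimizer goes beyond what the paper actually establishes (it stops at equality of the optimal values); the complementary-slackness step you rightly flag as delicate can be made rigorous, for instance by invoking the explicit form of the minimizing density from \citet{maccheroni2009portfolio}.
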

	\begin{proof}
		Given $\Q\in\Delta^2(\p)$, we consider an auxiliary preference $\bar{V}_{\theta,\Q}$ by choosing the probability measure $\Q$ in $V_\theta$:
		\begin{displaymath}
			\bar{V}_{\theta,\Q}(X) =  \mathbb{E}^{\mathbb{Q}} \left\{ X + \frac{1}{2\theta} \frac{\mathrm{d}\mathbb{Q}}{\mathrm{d}\mathbb{P}} - \frac{1}{2\theta}\right\} , \quad X \in L^2(\Omega,\mathcal{F},\mathbb{P}),
		\end{displaymath}
		where $\Q$ is given for the investor to take an expectation of its wealth with a penalty $\frac{\mathrm{d}\mathbb{Q}}{\mathrm{d}\mathbb{P}} - \frac{1}{2\theta}$. Obviously, we have $V_\theta(X)\leq \bar{V}_{\theta,\Q}(X) $ for any $\Q$ and $X$.
		
		Now we consider a particular measure $Q^{\ast}$ given by
		\begin{displaymath}
			\mathrm{d}\mathbb{Q}^{\ast}(\omega) = \frac{M}{m_1} \mathrm{d}\mathbb{P}(\omega), \quad \omega \in \Omega,
		\end{displaymath}
		which is actually the risk neutral measure corresponding to the pricing kernel. Interestingly, under $\Q^*$, the auxiliary preference $\bar{V}_{\theta,\Q^*}(X)$  is actually a constant, as
		\begin{equation}\label{eq MMV constant}
			\begin{aligned}
				&\mathbb{E}^{\mathbb{Q}^{\ast}} \left[ X^{\pi}_T + \frac{1}{2\theta} \frac{\mathrm{d}\mathbb{Q}^{\ast}}{\mathrm{d}\mathbb{P}} - \frac{1}{2\theta} \right] \\
				&= \mathbb{E} \left[ X^{\pi}_T \frac{\mathrm{d}\mathbb{Q}^{\ast}}{\mathrm{d}\mathbb{P}} + \frac{1}{2\theta} \left(\frac{\mathrm{d}\mathbb{Q}^{\ast}}{\mathrm{d}\mathbb{P}}\right)^2 - \frac{1}{2\theta} \right]\\ &= \frac{x_0}{m_1} + \frac{m_2}{2 \theta m_1^2 }.
			\end{aligned}
		\end{equation}
		Based on the property of the MMV preference, for any claim $X^\pi_T$ satisfying the admissible constraint $\mathbb{E} \left[ M X^{\pi}_T\right]=x_0$, we have the inequality
		\begin{displaymath}
			U_{\theta}\left(X^{\pi}_T\right) \leq V_{\theta}\left(X^{\pi}_T\right) \leq \bar{V}_{\theta,\Q^*}\left(X^{\pi}_T\right).
		\end{displaymath}
		Then,
		\begin{displaymath}
			\max_{\pi\in\mathcal{A}}U_{\theta}\left(X^{\pi}_T\right) \leq \max_{\pi\in\mathcal{A}}V_{\theta}\left(X^{\pi}_T\right) \leq \max_{\pi\in\mathcal{A}}\bar{V}_{\theta,\Q^*}\left(X^{\pi}_T\right).
		\end{displaymath}
		However, using Equation \eqref{eq MV value function} and  Equation \eqref{eq MMV constant}, we have
		\begin{displaymath}
			U_{\theta}\left(X^{\pi^{\ast}}_T\right) = \max_{\pi \in\mathcal{A}} V_{\theta}\left(X^{\pi}_T\right) =  \bar{V}_{\theta,\Q^*}\left(X^{\pi^{\ast}}_T\right) = \frac{x_0}{m_1} + \frac{m_2}{2 \theta m_1^2 }, 
		\end{displaymath}
		which indicates that the optimal values of Problem \ref{prob com MV 1} and Problem \ref{prob com MMV 1} are the same. 
	\end{proof}
	
	
	\section{General Market}\label{sec incom}
	
	When the market is more general, the discounted process (pricing kernel) may not be unique and the optimally invested wealth may not be replicated. Therefore, the above discussion needs to be revised. In this section, we first propose a unified framework for general markets on which we give a theorem on the relationship of MV and MMV. Second, we consider a semimartingale market and use the framework to give an equivalent condition of the consistency of MV and MMV. Finally, we give a nontrivial example to verify our result.
	
	\subsection{General Framework}\label{subsec general}
	
	Consider the following two general optimization problems. Let $\mathcal{A}$ denote the appropriate admissible set for the investor's final wealth $X^\pi_T$.  We only assume that $\mathcal{A}$ is a closed subset of $L^2(\p)$ and do not require any specific market setting. 
	\begin{problem}[General MV problem]\label{prob gen MV}
		\begin{displaymath}
			\left\{\begin{aligned}
				&\max_{}\  U_\theta(X_T),  \\
				&\text{subject to }X_T\in \mathcal{A}.
			\end{aligned}\right.
		\end{displaymath}
	\end{problem}
	\begin{problem}[General MMV problem]\label{prob gen MMV}
		\begin{displaymath}
			\left\{\begin{aligned}
				&\max_{}\  V_\theta(X_T),  \\
				&\text{subject to }X_T\in \mathcal{A}.
			\end{aligned}\right.
		\end{displaymath}
	\end{problem}
	
	Suppose that the two problems are finite. Then, there exists an optimally invested wealth $X^{*}_T\in\mathcal{A}$ that reaches the optimal value of MV Problem \ref{prob gen MV} because of the closed property of $\mathcal{A}$. Denote $\M(x_0)$ as the set of (signed) discounting measures under which the expected value of all the admissible $X_T$ is a constant $x_0\in\R$:
	\begin{displaymath}
		\M(x_0)=\left\{g\in L^2(\p):\E\left[g\right]=1, \E\left[gX_T\right]=x_0,\ \forall X_T\in \mathcal{A} \right\}.
	\end{displaymath}
	Let $L^2_+(\p)=\left\{g\in L^2(\p):g\geq 0 \right\}$. Then, we have the following characterization on the relationship of MV and MMV:
	\begin{theorem}\label{thm general}
		If the optimally invested wealth $X^{*}_T$ of MV Problem \ref{prob gen MV} satisfies 
		\begin{displaymath}
			1-\theta \left(X^{*}_T-\E\left[X^{*}_T\right]\right)\in L^2_+(\p)\cap \M(x_0) \text{ for some }x_0\in R,
		\end{displaymath} 
		then the optimal values of MV Problem \ref{prob gen MV} and MMV Problem \ref{prob gen MMV} are the same, i.e., 
		\begin{displaymath}
			\max_{X_T\in \mathcal{A}}\  U_\theta(X_T)=\max_{X_T\in \mathcal{A}}\  V_\theta(X_T).
		\end{displaymath}
		If 
		\begin{displaymath}
			1-\theta \left(X^{*}_T-\E\left[X^{*}_T\right]\right)\notin L^2_+(\p),
		\end{displaymath}
		then the optimal values of MV Problem \ref{prob gen MV} and MMV Problem \ref{prob gen MMV} are different, i.e., 
		\begin{displaymath}
			\max_{X_T\in \mathcal{A}}\  U_\theta(X_T)<\max_{X_T\in \mathcal{A}}\  V_\theta(X_T).
		\end{displaymath}
	\end{theorem}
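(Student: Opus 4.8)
The plan is to reuse the auxiliary‑preference device from Section \ref{sec com}: for $\Q\in\Delta^2(\p)$ with density $g=\ud\Q/\ud\p$, write $\bar V_{\theta,\Q}(X)=\E[gX]+\tfrac1{2\theta}\E[g^2]-\tfrac1{2\theta}$, so that $V_\theta(X)=\inf_{\Q\in\Delta^2(\p)}\bar V_{\theta,\Q}(X)\ge U_\theta(X)$. The engine of the whole argument is one algebraic identity. Set $g^*_X:=1-\theta(X-\E[X])$ for $X\in L^2(\p)$; then $\E[g^*_X]=1$, and for \emph{every} $g\in L^2(\p)$ with $\E[g]=1$ one checks, by expanding the square and using $\E[(g^*_X)^2]=1+\theta^2\var(X)$, that
\[
\E[gX]+\frac{1}{2\theta}\E[g^2]-\frac{1}{2\theta}=U_\theta(X)+\frac{1}{2\theta}\,\E\big[(g-g^*_X)^2\big].
\]
In particular $g=g^*_X$ gives value exactly $U_\theta(X)$, and minimizing over densities of measures in $\Delta^2(\p)$ — equivalently over $g\in K:=\{g\in L^2_+(\p):\E[g]=1\}$ — yields
\[
V_\theta(X)=U_\theta(X)+\frac{1}{2\theta}\,\operatorname{dist}_{L^2}\!\big(g^*_X,\,K\big)^2 .
\]
Since $K$ is closed and convex in $L^2(\p)$ and $\E[g^*_X]=1$ always, this distance vanishes iff $g^*_X\in L^2_+(\p)$.

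For the first assertion, put $g^*:=g^*_{X^*_T}=1-\theta(X^*_T-\E[X^*_T])$ and assume $g^*\in L^2_+(\p)\cap\M(x_0)$. Then $\ud\Q^*:=g^*\,\ud\p$ is a genuine probability measure in $\Delta^2(\p)$, and because $g^*\in\M(x_0)$,
\[
\bar V_{\theta,\Q^*}(X_T)=\E[g^*X_T]+\frac{1}{2\theta}\E[(g^*)^2]-\frac{1}{2\theta}=x_0+\frac{1}{2\theta}\big(\E[(g^*)^2]-1\big)
\]
is a constant independent of $X_T\in\mathcal{A}$; evaluating at $X_T=X^*_T$ and using $\E[g^*X^*_T]=\E[X^*_T]-\theta\var(X^*_T)$ identifies this constant as $U_\theta(X^*_T)$. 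Combining $U_\theta\le V_\theta\le\bar V_{\theta,\Q^*}$ on $\mathcal{A}$ with the fact that $X^*_T$ attains $\max_{\mathcal{A}}U_\theta$ gives the sandwich
\[
\max_{X_T\in\mathcal{A}}U_\theta(X_T)=U_\theta(X^*_T)\le\max_{X_T\in\mathcal{A}}V_\theta(X_T)\le\sup_{X_T\in\mathcal{A}}\bar V_{\theta,\Q^*}(X_T)=U_\theta(X^*_T),
\]
so all three coincide. For the second assertion, assume $g^*\notin L^2_+(\p)$; then $g^*\notin K$, hence $\operatorname{dist}_{L^2}(g^*,K)>0$ by closedness of $K$, so the displayed formula gives $V_\theta(X^*_T)>U_\theta(X^*_T)$, and since $X^*_T\in\mathcal{A}$,
\[
\max_{X_T\in\mathcal{A}}V_\theta(X_T)\ge V_\theta(X^*_T)>U_\theta(X^*_T)=\max_{X_T\in\mathcal{A}}U_\theta(X_T).
\]

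The routine inputs are the quadratic identity and the bound $U_\theta\le V_\theta$ (already used in Section \ref{sec com}). I expect the only genuinely delicate point to be the \emph{quantitative} upgrade in the first part: pointwise equality $V_\theta(X^*_T)=U_\theta(X^*_T)$ follows at once from $g^*\in L^2_+(\p)$, but promoting it to equality of the two optimal values over all of $\mathcal{A}$ is exactly where the hypothesis $g^*\in\M(x_0)$ is needed, since that is what forces $\bar V_{\theta,\Q^*}$ to be constant on $\mathcal{A}$ and thereby caps $\max_{\mathcal{A}}V_\theta$ from above. A secondary point to state carefully is the closedness (hence distance‑attainment) of $K$ in $L^2(\p)$, which is what makes the strict inequality in the second part actually strict rather than merely non‑strict.
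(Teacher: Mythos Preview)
Your proof is correct and follows essentially the same route as the paper: both pick $g^*=1-\theta(X^*_T-\E[X^*_T])$, use $g^*\in\M(x_0)$ to make $\bar V_{\theta,\Q^*}$ constant on $\mathcal{A}$, and sandwich $\max U_\theta\le\max V_\theta\le \bar V_{\theta,\Q^*}\equiv U_\theta(X^*_T)$. The one minor difference is in the second part: you derive the explicit identity $\bar V_{\theta,\Q}(X)=U_\theta(X)+\tfrac{1}{2\theta}\E[(g-g^*_X)^2]$ and hence $V_\theta(X)=U_\theta(X)+\tfrac{1}{2\theta}\operatorname{dist}_{L^2}(g^*_X,K)^2$, which makes the strict inequality self-contained, whereas the paper simply invokes the known fact from \citet{maccheroni2009portfolio} that $U_\theta(X)<V_\theta(X)$ whenever $X$ lies outside the monotone domain $\mathcal{G}_\theta$.
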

	\begin{proof}
		If $1-\theta \left(X^{*}_T-\E\left[X^{*}_T\right]\right)\in L^2_+(\p)\cap \M(x_0)$, similar to Subsection \ref{Subsec com MMV}, we consider a particular measure in $V_\theta$. Denote $M=1-\theta \left(X^{*}_T-\E\left[X^{*}_T\right]\right)$. Under the discounting measure $\frac{\ud \Q^*}{\ud \p}=M\in \M(x_0)$, the utility under $\bar{V}_\theta$ again becomes a constant as in Equation \eqref{eq MMV constant}:
		\begin{displaymath}
			\begin{aligned}
				&\mathbb{E}^{\mathbb{Q}^{\ast}} \left[ X_T + \frac{1}{2\theta} \frac{\mathrm{d}\mathbb{Q}^{\ast}}{\mathrm{d}\mathbb{P}} - \frac{1}{2\theta} \right] \\
				&= \mathbb{E} \left[ X^{\pi}_T \frac{\mathrm{d}\mathbb{Q}^{\ast}}{\mathrm{d}\mathbb{P}} + \frac{1}{2\theta} \left(\frac{\mathrm{d}\mathbb{Q}^{\ast}}{\mathrm{d}\mathbb{P}}\right)^2 - \frac{1}{2\theta} \right] \\&= {x_0} + \frac{\E\left[M^2\right]}{2 \theta}- \frac{1}{2\theta}, 
			\end{aligned}
		\end{displaymath}
		which gives an upper bound of the utility under MMV. However, simple calculation yields 
		\begin{align}
			U_\theta(X^{*}_T)&=\inf_{Y\in L^2,\E\left[Y\right]=1}\E\left[X^{*}_TY+\frac{1}{2\theta}Y^2-\frac{1}{2\theta}\right]\notag \\
			&=\E\left[X^{*}_TM+\frac{1}{2\theta}M^2-\frac{1}{2\theta}\right]={x_0} + \frac{\E\left[M^2\right]}{2 \theta}- \frac{1}{2\theta}. \notag
		\end{align} 
		Thus, the following inequality holds:
		\begin{displaymath}
			\max_{X_T\in\mathcal{A}}U_\theta(X_T)\leq \max_{X_T\in\mathcal{A}}V_\theta(X_T)\leq U_\theta(X^{*}_T)=\max_{X_T\in\mathcal{A}}U_\theta(X_T).
		\end{displaymath}  
		As such, 
		\begin{displaymath}
			\max_{X_T\in\mathcal{A}}V_\theta(X_T)=\max_{X_T\in\mathcal{A}}U_\theta(X_T),
		\end{displaymath}
		thus we get the consistency of MV and MMV. 
		
		Conversely, if $1-\theta \left(X^{*}_T-\E\left[X^{*}_T\right]\right)\notin L^2_+(\p)$, then
		\begin{displaymath}
			\p(X^*_T-\E\left[X^*_T\right]> \frac{1}{\theta})>0,
		\end{displaymath} 
		which means that $X^*_T$ is not in the domain of monotonicity of MV preference. Then, we have $U_\theta(X^*_T)<V_\theta(X^*_T)$ and 
		\begin{displaymath}
			\max_{X_T\in\mathcal{A}}U_\theta(X_T)=U_\theta(X^*_T)< \max_{X_T\in\mathcal{A}}V_\theta(X_T),
		\end{displaymath}
		which indicates the inconsistency of MV and MMV. 	
	\end{proof}
	
	As we can see, Theorem \ref{thm general} does not rely on any specific setting on the financial market, the risky asset, or the admissible investment strategy $\pi$, but only gives a characterization on deciding whether MV and MMV problems are consistent or not.

	\subsection{Semimartingale Market}
	In this subsection, we consider a semimartingale market and use our general framework to give an equivalent characterization of the relationship between MV and MMV. We consider a continuous-time multi-dimensional semimartingale $S$, which represents multiple risky assets in the financial market. The setting is the same as \citet{XYincomplete2006}, where the classical MV portfolio selection in an incomplete market has been studied. 
	
	Let $K^{S}_2$ be the subspace spanned by the ``simple'' strategies $h\cdot(S_{T_2}-S_{T_1})$, where $T_1\leq T_2$ are stopping times such that $\{S_U: U \text{ is a stopping time}, U\leq T_2\}\subset L^2(\p)$ and $h\in L^\infty(\p)$. Let $K_2$ be the closure of $K_2^S$ in $L_2$. It has been proved that there exists a set $\mathcal{M}^{2,s}$ which represents all signed martingale measures:
	\begin{displaymath}
		\mathcal{M}^{2,s}=\left\{g\in L^2(\p):\E\left[gf\right]=0,\text{ for all }f\in K^{S}_2\text{ and } \E\left[g\right]=1\right\}.
	\end{displaymath}
	
	Denoted $\Theta$ as all  $\mathbb{R}^{m}$-valued predictable $S$-integrable processes $\vartheta$, such that $G_{T}(\vartheta):=\int_{0}^{T} \vartheta d S \in L^{2}(\mathbb{P})$  and  $\mathbb{E}\left[G_{T}(\vartheta) g\right]=0, \ \forall \ g \in \mathcal{M}^{2,s}$. Let
	\begin{displaymath}
		G_{T}(\Theta):=\left\{G_{T}(\vartheta): \vartheta \in \Theta\right\}.
	\end{displaymath}
	Under the following two assumptions, we have $G_{T}(\Theta) = K_{2}$ and the dual relationship 
	\begin{equation}\label{eq incom dual relation}
		f\in K_2\Longleftrightarrow f\in L^2(\p), \quad \E\left[fg\right]=0\text{ for all }g\in \mathcal{M}^{2, s}.  
	\end{equation}
	
	Throughout the rest part of this section, we keep the following two assumptions:
	\begin{assumption}
		$S$ is locally in $L^2(\p)$.
	\end{assumption}
	\begin{assumption}
		The set of all equivalent martingale measures $\mathcal{M}^{2,e}=\left\{q\in \mathcal{M}^{2,s}:q>0\right\}$ is non-empty.
	\end{assumption}
	
	Then, $\mathcal{M}^{2, s}$ is a non-empty, closed and convex subset of $L^{2}(\mathbb{P})$. There exists a unique element of the minimal $L^{2}(\mathbb{P})$ norm in $\mathcal{M}^{2, s}$, denoted by $M$ and called the variance-optimal signed martingale measure (VSMM) for $S$, as it minimizes $\operatorname{Var}[g]$ over $g \in \mathcal{M}^{2, s}$. 
	
	A trading strategy $\vartheta$ is said to be admissible if $\vartheta=\left(\vartheta^1_t,\dots,\vartheta^n_t\right)_{0\leq t\leq T}$ is a predictable $S$-integral process and $\vartheta\in \Theta$. Suppose that the investor has an initial wealth $x_0$, then the discounted wealth process can be represented as
	\begin{displaymath}
		X_t^{x_0,\vartheta}=x_0+\int_{0}^{t} \vartheta d S,\ 0\leq t\leq T.
	\end{displaymath} 
	We denote all admissible wealth processes by $\X(x_0)=\{X^{x_0,\vartheta}:\vartheta\in\Theta\}$. The set of all the admissible final wealth $\left\{X_T:X\in \X(x_0)\right\}$ is correspondingly $x_0+G_{T}(\Theta)$. Up to here, all definitions required in our general framework (Subsection \ref{subsec general}) are clear. When the initial value is $x_0$, the admissible final wealth set is $\mathcal{A}=x_0+G_{T}(\Theta)$. And the set of discounting measures is $\M(x_0)=\M^{2,s}$.
	
	\subsection{MV Problem in Semimartingale Market} \label{subsec incom MV}
	
	To apply Theorem \ref{thm general}, we need the optimally invested wealth under MV. The MV investment problem in semimartingale market has been partly solved by \citet{XYincomplete2006} without giving out the optimal strategy or value function. In this subsection, we restate and further refine their results. We use the notation $X\in \X(x_0)$ instead of $X_T\in \mathcal{A}=x_0+G_{T}(\Theta)$ to match the notation in \citet{XYincomplete2006}. The MV investment problem is: 
	\begin{problem}[MV problem in semimartingale market]\label{prob incom MV 1}\begin{displaymath}
			\left\{\begin{aligned}
				&\max\  U_\theta(X_T)=\E\left[X_T\right]-\frac{\theta}{2}\mathrm{Var}(X_T),\\
				&\text{subject to } X \in \mathcal{X}(x_0).
			\end{aligned}\right.
		\end{displaymath}
	\end{problem}
	
	We embed this problem to a series of auxiliary problems: 
	\begin{problem}\label{prob incom MV 2}
		\begin{displaymath}\left\{\begin{aligned}
				&\min\  U_\theta(X_T)=\E[X_T^2],\\
				&\text{subject to } X \in \mathcal{X}(x_0), \E\left[X_T\right]=z. 
			\end{aligned}\right.\end{displaymath} 
	\end{problem}
	\begin{problem} \label{prob MV 3}
		\begin{displaymath}\left\{\begin{aligned}
				&\min\  U_\theta(X_T)=\E[(X_T-\lambda)^2],\\
				&\text{subject to } X \in \mathcal{X}(x_0).
			\end{aligned}\right.\end{displaymath} 
	\end{problem}
	
	\citet{XYincomplete2006} have proved that the optimally invested wealth of Problem \ref{prob incom MV 2} has a one-to-one mapping relation to that of Problem \ref{prob MV 3}. Then, they give the following lemma on the optimally invested wealth of Problem \ref{prob MV 3}:
	\begin{lemma}\label{lem MV prob3}
		Let $\lambda\neq x_0$. If there exists an $\tilde{X}\in\X(x_0)$, $\tilde{g}\in\M^{2,s}$ such that 
		\begin{displaymath}
			\tilde{X}_T=\lambda-\tilde{y}\tilde{g},
		\end{displaymath} 
		then $\tilde{X}$ solves Problem \ref{prob MV 3} and $\tilde{y}=\frac{\lambda-x_0}{\E\left[M^2\right]}$. 
	\end{lemma}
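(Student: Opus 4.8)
The plan is to recognise Problem \ref{prob MV 3} as an orthogonal-projection problem in the Hilbert space $L^2(\p)$: since $\tilde X\in\X(x_0)$ means $\tilde X_T\in x_0+G_T(\Theta)$ and $G_T(\Theta)=K_2$ is a closed subspace, minimising $\E[(X_T-\lambda)^2]$ over $\X(x_0)$ is nothing but computing the $L^2$-projection of the constant $\lambda$ onto the closed affine set $\mathcal A=x_0+K_2$. By the projection theorem, a wealth $\tilde X$ is the (unique) minimiser if and only if $\tilde X_T\in x_0+K_2$ and the residual $\lambda-\tilde X_T$ is orthogonal to $K_2$.

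First I would verify optimality of the candidate. By hypothesis $\lambda-\tilde X_T=\tilde y\,\tilde g$ with $\tilde g\in\M^{2,s}$; by definition every element of $\M^{2,s}$ is orthogonal to $K_2^S$, hence to its $L^2$-closure $K_2$ (equivalently, one may invoke \eqref{eq incom dual relation}). Thus $\lambda-\tilde X_T\perp K_2$, and since $\tilde X\in\X(x_0)$ already gives $\tilde X_T\in x_0+K_2$, the projection characterisation is met and $\tilde X$ solves Problem \ref{prob MV 3}.

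Next I would pin down $\tilde y$ by pairing the two expressions for $\tilde X_T$ against the VSMM $M$. On the one hand, writing $\tilde X_T=x_0+G_T(\vartheta)$ with $\vartheta\in\Theta$ and using $\E[M]=1$ together with $\E[M\,G_T(\vartheta)]=0$ (which holds because $M\in\M^{2,s}$ and $\vartheta\in\Theta$), we get $\E[M\tilde X_T]=x_0$. On the other hand, because $M$ is the minimal-$L^2$-norm element of the \emph{affine} set $\M^{2,s}$, the variational inequality $\E[M(g-M)]\ge0$ holds for all $g\in\M^{2,s}$ and, $\M^{2,s}$ being affine, sharpens to the identity $\E[Mg]=\E[M^2]$ for every $g\in\M^{2,s}$; in particular $\E[M\tilde g]=\E[M^2]$. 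Substituting $\tilde X_T=\lambda-\tilde y\tilde g$ into $\E[M\tilde X_T]=x_0$ then yields $x_0=\lambda-\tilde y\,\E[M^2]$, so $\tilde y=\frac{\lambda-x_0}{\E[M^2]}$; note $\E[M^2]\ge(\E[M])^2=1>0$ makes this well defined, and $\lambda\ne x_0$ forces $\tilde y\ne0$.

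The routine parts are the Hilbert-space projection theorem and the final pairing computation; the one point that needs care is the second step of the middle paragraph — recalling that $M$, being the norm-minimiser over the \emph{affine} (not merely convex) set $\M^{2,s}$, satisfies $\E[Mg]=\E[M^2]$ with equality for all competitors $g$, which is exactly what turns the one-sided optimality condition into the clean value of $\tilde y$. I expect this affine-geometry observation, rather than any lengthy calculation, to be the main thing to get right.
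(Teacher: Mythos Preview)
Your argument is correct. The paper does not actually prove Lemma~\ref{lem MV prob3}; it is quoted from \citet{XYincomplete2006} without proof, so there is no in-paper argument to compare against. That said, your projection-theorem approach is exactly the natural one for this setting, and the crucial identity you isolate --- that the variance-optimal element $M$ of the \emph{affine} set $\M^{2,s}$ satisfies $\E[Mg]=\E[M^2]$ for every $g\in\M^{2,s}$ --- is precisely the fact the paper itself uses (without comment) in the proof of the very next theorem when it replaces $\E[Mg]$ by $\E[M^2]$. So your reasoning not only proves the lemma but also supplies the justification the paper tacitly relies on immediately afterwards.
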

	
	Based on Lemma \ref{lem MV prob3}, we propose the following theorem, which is slightly different from the result of \citet{XYincomplete2006}:
	\begin{theorem}
		The optimally invested wealth that solves Problem \ref{prob MV 3} is
		\begin{displaymath}
			X^{\lambda,*}_T=\lambda-\frac{\lambda-x_0}{\E\left[M^2\right]}M.
		\end{displaymath}
		Meanwhile, there exists an $X\in\X(x_0)$ such that $X_T=X^{\lambda,*}_T$.
	\end{theorem}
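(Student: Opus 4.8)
The plan is to reduce the theorem to Lemma~\ref{lem MV prob3} by taking the signed martingale measure in that lemma to be the VSMM $M$ itself. With $\tilde g = M$, the lemma forces $\tilde y = (\lambda-x_0)/\E[M^2]$, so the candidate optimal terminal wealth is exactly $X^{\lambda,*}_T = \lambda - \frac{\lambda-x_0}{\E[M^2]}M$. The only hypothesis of Lemma~\ref{lem MV prob3} that is not automatic is that this candidate is \emph{attainable}, i.e.\ that there exists $X\in\X(x_0)$ with $X_T = X^{\lambda,*}_T$; proving attainability is the real content of the theorem, and I expect it to be the main obstacle. (The case $\lambda = x_0$ is trivial: then $X^{\lambda,*}_T = x_0$ is attained by $\vartheta\equiv 0$ and obviously minimizes $\E[(X_T-x_0)^2]$.) Once attainability holds, Lemma~\ref{lem MV prob3} gives optimality at once, and the minimizer is unique because Problem~\ref{prob MV 3} is nothing but the $L^2(\p)$-projection of the constant $\lambda$ onto the closed affine set $x_0 + G_T(\Theta) = x_0 + K_2$.

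To handle attainability I would use the duality \eqref{eq incom dual relation}: since $G_T(\Theta)=K_2$, a claim $x_0 + f$ with $f\in L^2(\p)$ is an admissible terminal wealth iff $\E[fg]=0$ for every $g\in\M^{2,s}$. Writing $X^{\lambda,*}_T - x_0 = (\lambda - x_0)\bigl(1 - M/\E[M^2]\bigr)$, which lies in $L^2(\p)$ because $M\in\M^{2,s}\subset L^2(\p)$ and $\E[M^2]\ge(\E[M])^2 = 1 > 0$, one computes for $g\in\M^{2,s}$ that $\E\bigl[(X^{\lambda,*}_T-x_0)g\bigr] = (\lambda-x_0)\bigl(1 - \E[Mg]/\E[M^2]\bigr)$. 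Hence attainability reduces to the single identity $\E[Mg] = \E[M^2]$ for all $g\in\M^{2,s}$.

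This identity is the defining variational property of the variance-optimal element, and I would prove it from the geometry of $\M^{2,s}$. The set $\M^{2,s}$ is \emph{affine}: the conditions $\E[gf]=0$ for $f\in K^S_2$ are linear and $\E[g]=1$ is affine, so with $g\in\M^{2,s}$ also $2M-g\in\M^{2,s}$. Because $M$ has minimal $L^2$ norm in the convex set $\M^{2,s}$, differentiating $t\mapsto\|M + t(g-M)\|^2$ at $t=0^+$ along the segment from $M$ to $g$ gives $\E[M(g-M)]\ge 0$, i.e.\ $\E[Mg]\ge\E[M^2]$; applying the same to $2M-g\in\M^{2,s}$ gives $\E[M(2M-g)]\ge\E[M^2]$, i.e.\ $\E[Mg]\le\E[M^2]$. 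Therefore $\E[Mg]=\E[M^2]$, so $X^{\lambda,*}_T - x_0\in K_2 = G_T(\Theta)$, which produces the required $X\in\X(x_0)$ with $X_T = X^{\lambda,*}_T$; invoking Lemma~\ref{lem MV prob3} then finishes the proof.
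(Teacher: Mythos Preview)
Your proof is correct and follows essentially the same route as the paper: reduce to attainability via Lemma~\ref{lem MV prob3}, then use the duality \eqref{eq incom dual relation} together with the identity $\E[Mg]=\E[M^2]$ for all $g\in\M^{2,s}$. In fact you go further than the paper, which simply asserts $\E[Mg]=\E[M^2]$ without justification, whereas you derive it cleanly from the affine structure of $\M^{2,s}$ and the first-order optimality of the minimal-norm element $M$.
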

	
	\begin{proof}
		By Lemma \ref{lem MV prob3}, we only need to prove that there exists an $X\in\X(x_0)$ such that $X_T=X^{\lambda,*}_T$. For every $g\in\M^{2,s}$, we have   
		\begin{displaymath}
			\begin{aligned}
				&\E\left[\left(X^{\lambda,*}_T-x_0\right)g\right]\\&=\lambda-x_0-\frac{\lambda-x_0}{\E\left[M^2\right]}\E\left[Mg\right]\\
				&=\lambda-x_0-\frac{\lambda-x_0}{\E\left[M^2\right]}\E\left[M^2\right]\\&=0. 
			\end{aligned}
		\end{displaymath}
		Thus, the dual relationship Equation \eqref{eq incom dual relation} indicates that $\exists \ \vartheta^*$ such that $X^{\lambda,*}_T-x_0=\int_{0}^{T}\vartheta^*\ud S$.
	\end{proof}
	
	After solving Problem \ref{prob MV 3}, we are able to derive the optimally invested wealth of Problem \ref{prob incom MV 1} and the following theorem gives the result:
	\begin{theorem}\label{thm mv semi}
		The optimally invested wealth that solves Problem \ref{prob incom MV 1} is 
		\begin{displaymath}
			X^*_T=x_0+\frac{\E\left[M^2\right]}{\theta}-\frac{1}{\theta}M. 
		\end{displaymath}
		The corresponding value function is 
		\begin{displaymath} 
			U_\theta(X^{*}_T)=x_0+\frac{\E\left[M^2\right]}{2\theta}-\frac{1}{2\theta}.
		\end{displaymath}
	\end{theorem}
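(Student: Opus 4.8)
The plan is to derive Theorem~\ref{thm mv semi} from the previously established results on the auxiliary Problem~\ref{prob MV 3} by carrying out the embedding in the reverse direction: solve Problem~\ref{prob MV 3} parametrically in $\lambda$, then Problem~\ref{prob incom MV 2} by matching the mean constraint $\E[X_T]=z$, and finally optimize the mean--variance objective $U_\theta$ over the free parameter. Concretely, from the preceding theorem the solution of Problem~\ref{prob MV 3} is $X^{\lambda,*}_T=\lambda-\frac{\lambda-x_0}{\E[M^2]}M$, which is attainable in $\X(x_0)$. First I would compute its mean $\E[X^{\lambda,*}_T]=\lambda-\frac{(\lambda-x_0)m_1}{\E[M^2]}$, where $m_1=\E[M]$, and note that $\E[M]=\E[M\cdot M_{\text{const}}]$, i.e.\ $\E[M]=\E[M^2]$ is \emph{not} true in general, so one should keep $m_1=\E[M]$; actually since $M\in\M^{2,s}$ and the constant $1\in K_2$? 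No --- more carefully, $\E[M]=1$ by definition of $\M^{2,s}$, so $\E[X^{\lambda,*}_T]=\lambda-\frac{\lambda-x_0}{\E[M^2]}$. Then $\Var(X^{\lambda,*}_T)=\left(\frac{\lambda-x_0}{\E[M^2]}\right)^2\Var(M)=\left(\frac{\lambda-x_0}{\E[M^2]}\right)^2(\E[M^2]-1)$, and $\E[(X^{\lambda,*}_T)^2]=\Var+(\text{mean})^2$.

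Next I would assemble $U_\theta(X^{\lambda,*}_T)=\E[X^{\lambda,*}_T]-\frac{\theta}{2}\Var(X^{\lambda,*}_T)$ as an explicit quadratic function of $\lambda$ (equivalently of the one-dimensional parameter $y=\frac{\lambda-x_0}{\E[M^2]}$, so that $X^{\lambda,*}_T=x_0+y(\E[M^2]-M)$ --- wait, $\lambda=x_0+y\E[M^2]$ gives $X^{\lambda,*}_T=x_0+y\E[M^2]-yM=x_0+y(\E[M^2]-M)$, a cleaner parametrization). In this parametrization $\E[X^{\lambda,*}_T]=x_0+y(\E[M^2]-1)$ and $\Var(X^{\lambda,*}_T)=y^2(\E[M^2]-1)$, so $U_\theta=x_0+y(\E[M^2]-1)-\frac{\theta}{2}y^2(\E[M^2]-1)$. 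Maximizing the concave quadratic in $y$ (using $\E[M^2]>1$, which holds because $\E[M]=1$ and $M$ is non-degenerate --- if $M\equiv1$ the market would be trivial, and in any case $\E[M^2]\ge(\E[M])^2=1$ with equality only in the degenerate case) gives $y^*=\frac{1}{\theta}$. Substituting back yields $X^*_T=x_0+\frac1\theta(\E[M^2]-M)=x_0+\frac{\E[M^2]}{\theta}-\frac1\theta M$ and $U_\theta(X^*_T)=x_0+\frac{\E[M^2]-1}{\theta}-\frac{\theta}{2}\cdot\frac{1}{\theta^2}(\E[M^2]-1)=x_0+\frac{\E[M^2]-1}{2\theta}=x_0+\frac{\E[M^2]}{2\theta}-\frac1{2\theta}$, as claimed. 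Attainability of $X^*_T$ in $\X(x_0)$ is inherited from the $\lambda=x_0+\frac{\E[M^2]}{\theta}$ case of the previous theorem.

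The one genuine gap to address is the legitimacy of the embedding itself: one must argue that the maximizer of $U_\theta$ over $\X(x_0)$ is indeed found among the $X^{\lambda,*}_T$, i.e.\ that restricting to solutions of the auxiliary problems loses no optimality. The standard argument (as in \citet{ZXY1999stochastic}, \citet{XYincomplete2006}) is that any admissible $X$ with $\E[X_T]=z$ satisfies $U_\theta(X_T)=z-\frac{\theta}{2}(\E[X_T^2]-z^2)$, which for fixed $z$ is maximized by minimizing $\E[X_T^2]$, i.e.\ by solving Problem~\ref{prob incom MV 2}; then one optimizes over $z$. I would state this reduction explicitly and invoke the one-to-one correspondence between Problems~\ref{prob incom MV 2} and~\ref{prob MV 3} recorded above. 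The main obstacle is thus not computational but bookkeeping: making sure the parameter ranges match up (every $z\in\R$ arises, so every relevant $\lambda\ne x_0$ arises, and the optimal $y^*=1/\theta\ne0$ indeed corresponds to $\lambda\ne x_0$) and that the value is genuinely a maximum, i.e.\ $\E[M^2]>1$ strictly so the quadratic in $y$ is strictly concave; this last point deserves a one-line justification from non-degeneracy of $M$.
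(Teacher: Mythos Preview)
Your proposal is correct and follows essentially the same route as the paper: reduce to the one-parameter family $X^{\lambda,*}_T$ via the embedding into Problems~\ref{prob incom MV 2}--\ref{prob MV 3}, compute $U_\theta$ along this family, and maximize the resulting concave quadratic. The only cosmetic difference is your reparametrization $y=(\lambda-x_0)/\E[M^2]$, which cleans up the algebra; the paper works directly in $\lambda$ and simply asserts ``it is easy to see'' for the embedding step that you spell out more carefully.
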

	
	\begin{proof}
		It is easy to see that the optimally invested terminal wealth that solves Problem \ref{prob incom MV 1} should belong to the following set:
		\begin{displaymath}
			\left\{X^{\lambda,*}_T=\lambda-\frac{\lambda-x_0}{\E\left[M^2\right]}M:\lambda\in\R\right\}. 
		\end{displaymath}
		The MV utility gained on $X^{\lambda,*}_T$ is 
		\begin{displaymath} 
			U_\theta(X^{\lambda,*}_T)=\lambda-\frac{\lambda-x_0}{\E\left[M^2\right]}-\frac{\theta}{2}\frac{(\lambda-x_0)^2}{\E\left[M^2\right]^2}\left(\E\left[M^2\right]-1\right), 
		\end{displaymath}
		which is a quadratic function with the parameter $\lambda$. And it is easy to check that the $\lambda$ that maximizes $U_\theta(X^{\lambda,*}_T)$ is $\lambda^*=x_0+\frac{\E\left[M^2\right]}{\theta}$, and then 
		\begin{displaymath} 
			X^*_T=x_0+\frac{\E\left[M^2\right]}{\theta}-\frac{1}{\theta}M, \quad U_\theta(X^{*}_T)=x_0+\frac{\E\left[M^2\right]}{2\theta}-\frac{1}{2\theta}. 
		\end{displaymath}
	\end{proof}
	
	\subsection{MMV Problem in Semimartingale Market}
	
	In this subsection, we consider the solution to the MMV investment problem and then discuss the relationship of MV and MMV in the semimartingale market. We use the result in our general framework to give an equivalent characterization of this relationship. Following the setting in Subsection \ref{subsec incom MV}, the problem is constructed as: 
	\begin{problem}[MMV problem in semimartingale market]\label{prob MMV}
		\begin{displaymath} 
			\left\{\begin{aligned}
				&\max\  {V}_{\theta}(X_T) = \inf_{\mathbb{Q} \in \Delta^2(\mathbb{P})}\mathbb{E}^{\mathbb{Q}} \left\{ X_T + \frac{1}{2\theta} \frac{\mathrm{d}\mathbb{Q}}{\mathrm{d}\mathbb{P}} - \frac{1}{2\theta}\right\},\\
				&\text{subject to }X\in \mathcal{X}(x_0).
			\end{aligned}\right.
		\end{displaymath} 
	\end{problem}

	Now we can directly use Theorem \ref{thm general} to get the relationship between MV and MMV in this semimartingale market:
	\begin{theorem}
		Solutions to MV Problem \ref{prob incom MV 1} and MMV Problem \ref{prob MMV} are the same if and only if the variance-optimal signed martingale measure (VSMM) keeps non-negative, i.e., $M \geq 0$.
	\end{theorem}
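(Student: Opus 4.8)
The plan is to apply Theorem \ref{thm general} directly, with admissible set $\mathcal{A}=x_0+G_T(\Theta)$ and discounting set $\M(x_0)=\M^{2,s}$ as recorded above, so that the whole argument reduces to identifying the random variable $1-\theta\left(X^*_T-\E\left[X^*_T\right]\right)$ that governs the dichotomy in that theorem.

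First I would read off from Theorem \ref{thm mv semi} that the MV-optimal terminal wealth is $X^*_T=x_0+\frac{\E\left[M^2\right]}{\theta}-\frac1\theta M$, where $M$ is the VSMM. Since $M\in\M^{2,s}$ we have $\E[M]=1$, hence $\E\left[X^*_T\right]=x_0+\frac{\E\left[M^2\right]}{\theta}-\frac1\theta$, and therefore
\[
X^*_T-\E\left[X^*_T\right]=\frac{1-M}{\theta},\qquad 1-\theta\left(X^*_T-\E\left[X^*_T\right]\right)=M .
\]
So the ``test random variable'' in Theorem \ref{thm general} is exactly the VSMM itself, which moreover always lies in $\M^{2,s}=\M(x_0)$ and satisfies $\E[M^2]<\infty$.

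Consequently: if $M\ge 0$, then $1-\theta\left(X^*_T-\E\left[X^*_T\right]\right)=M\in L^2_+(\p)\cap\M(x_0)$, and the first part of Theorem \ref{thm general} yields equality of the optimal values of Problems \ref{prob incom MV 1} and \ref{prob MMV}; since $V_\theta\ge U_\theta$ pointwise, this forces $X^*_T$ to be optimal for the MMV problem as well, so the solutions coincide. If instead $\p(M<0)>0$, i.e. $M\notin L^2_+(\p)$, the second part of Theorem \ref{thm general} gives a strict gap $\max_{X\in\X(x_0)}U_\theta(X_T)<\max_{X\in\X(x_0)}V_\theta(X_T)$, so the solutions differ. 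As $\{M\ge 0\ \text{a.s.}\}$ and $\{\p(M<0)>0\}$ are complementary for $M\in L^2(\p)$, this establishes the equivalence.

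The argument is mostly bookkeeping once Theorems \ref{thm general} and \ref{thm mv semi} are available; the only points requiring a little care are the computation of $\E\left[X^*_T\right]$ via $\E[M]=1$ and the verification that $\M(x_0)$ genuinely coincides with $\M^{2,s}$ in this market — that is, that $\E\left[g\left(x_0+G_T(\vartheta)\right)\right]=x_0$ for every $g\in\M^{2,s}$ and $\vartheta\in\Theta$ — which is exactly the orthogonality $\E[gG_T(\vartheta)]=0$ built into the definition of $\Theta$, together with the dual relation $G_T(\Theta)=K_2$. I do not expect a genuine obstacle here.
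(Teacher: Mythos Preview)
Your proposal is correct and follows essentially the same route as the paper: compute $1-\theta\left(X^*_T-\E\left[X^*_T\right]\right)=M$ from Theorem~\ref{thm mv semi} (using $\E[M]=1$) and then invoke the two alternatives of Theorem~\ref{thm general} with $\M(x_0)=\M^{2,s}$. The paper's proof is just a terser version of exactly this argument.
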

	\begin{proof}
		By Theorem \ref{thm mv semi} we already know that the optimally invested wealth under MV Problem \ref{prob incom MV 1} is $X^*_T=x_0+\frac{\E\left[M^2\right]}{\theta}-\frac{1}{\theta}M$. Then, we have \begin{displaymath}
			1-\theta\left(X^*_T-\E\left[X^*_T\right]\right)=M\in \M=\M^{2,s}.
		\end{displaymath}
		Thus, by Theorem \ref{thm general}, if $M\in L^2_+(\p)$, solutions to MV Problem \ref{prob incom MV 1} and MMV Problem \ref{prob MMV} are the same. If $M\notin L^2_+(\p)$, solutions to two problems are different.
	\end{proof}
	
	In this market, the particular martingale measure $\frac{\ud \Q^*}{\ud \p}$ used by us is $M$. Under this measure, the investor discounts the wealth process and decides the investment strategy. If the measure is signed, then the MV investor would take a signed measure to discount its wealth process, which is not reasonable in the economic rationality and results in the non-monotonicity problem. That is the fundamental reason for the non-monotonicity of MV preferences, leading to the inconsistency of MV and MMV. Instead, MMV fixes such problem by taking an adjusted and non-signed measure to discount the investor's wealth process, see the following Theorem \ref{thm semi mmv}.   
	
	Furthermore, even though MV and MMV can be inconsistent, we can still give out the solution to MMV problem. 
	\begin{theorem}\label{thm semi mmv}
		For Problem \ref{prob MMV}, the maximum utility reached by the MMV investor is 
		\begin{displaymath}
			\max_{X\in\X(x_0)}V_\theta(X_T)=x_0+\frac{1}{2\theta}\E\left[\tilde{M}^2\right]-\frac{1}{2\theta},
		\end{displaymath}
		where $\tilde{M}=\arg\min_{Y\in \M^{2,s}\cap  L^2_+}\E\left[Y^2\right]$. The optimally invested wealth should satisfy the dual relationship 
		\begin{displaymath}
			\tilde{M}=\theta\left(X_T^*-x_0-\frac{\E\left[\tilde{M}^2\right]}{\theta}\right)^-.
		\end{displaymath}
		Equivalently, we have 
		\begin{displaymath}
			X_T^*\wedge \kappa-\E\left[X_T^*\wedge \kappa\right]=\frac{1}{\theta}-\frac{\tilde{M}}{\theta},
		\end{displaymath} 
		where  $\E\left[\left(X_T^*-\kappa\right)^-\right]=\frac{1}{\theta}$  and  $\kappa=x_0+\frac{\E\left[\tilde{M}^2\right]}{\theta}$ .    
	\end{theorem}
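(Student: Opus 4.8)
The plan is to first replace the infimum over $\mathbb{Q}\in\Delta^2(\p)$ in ${V}_\theta$ by an infimum over densities $Y=\ud\mathbb{Q}/\ud\p$ ranging in $\mathcal{Y}:=\{Y\in L^2_+(\p):\E[Y]=1\}$, so that for $X_T\in\mathcal{A}=x_0+K_2$ one has $V_\theta(X_T)=\inf_{Y\in\mathcal{Y}}\E[X_TY+\frac{1}{2\theta}Y^2-\frac{1}{2\theta}]$, and to record that $\M^{2,s}\cap L^2_+(\p)$ is non-empty (it contains $\mathcal{M}^{2,e}$), closed and convex, hence carries a unique element of minimal $L^2$-norm, which is precisely $\tilde M$, characterised by the variational inequality $\E[\tilde M(g-\tilde M)]\ge 0$ for all $g\in\M^{2,s}\cap L^2_+(\p)$. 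The easy half is the upper bound: since $\tilde M\in\M^{2,s}\cap L^2_+(\p)\subset\mathcal{Y}$ is a legitimate competitor in the inner infimum and $X_T-x_0\in K_2$ is orthogonal to $\tilde M\in\M^{2,s}$,
\begin{equation*}
V_\theta(X_T)\le\E\Big[X_T\tilde M+\tfrac{1}{2\theta}\tilde M^2-\tfrac{1}{2\theta}\Big]=x_0+\tfrac{1}{2\theta}\E[\tilde M^2]-\tfrac{1}{2\theta}
\end{equation*}
for every $X_T\in\mathcal{A}$, whence $\sup_{X\in\X(x_0)}V_\theta(X_T)\le x_0+\tfrac{1}{2\theta}\E[\tilde M^2]-\tfrac{1}{2\theta}$.

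For the reverse inequality I would exhibit an optimiser. Put $\kappa:=x_0+\E[\tilde M^2]/\theta$ and suppose (this is the hard point, handled below) that there is $X^*_T\in\X(x_0)$ with $X^*_T\wedge\kappa=\kappa-\tilde M/\theta$, i.e.\ $\theta(X^*_T-\kappa)^-=\tilde M$. Then for every $Y\in\mathcal{Y}$, using $Y\ge 0$, $X^*_T\ge X^*_T\wedge\kappa$, and $X^*_T\wedge\kappa=\kappa-\tilde M/\theta$,
\begin{align*}
\E\Big[X^*_TY+\tfrac{1}{2\theta}Y^2-\tfrac{1}{2\theta}\Big]
&\ge\E\big[(X^*_T\wedge\kappa)Y\big]+\tfrac{1}{2\theta}\E[Y^2]-\tfrac{1}{2\theta}\\
&=\kappa-\tfrac{1}{\theta}\E[\tilde M\, Y]+\tfrac{1}{2\theta}\E[Y^2]-\tfrac{1}{2\theta}\\
&=\kappa+\tfrac{1}{2\theta}\E\big[(Y-\tilde M)^2\big]-\tfrac{1}{2\theta}\E[\tilde M^2]-\tfrac{1}{2\theta}\\
&\ge x_0+\tfrac{1}{2\theta}\E[\tilde M^2]-\tfrac{1}{2\theta}.
\end{align*}
Taking the infimum over $Y\in\mathcal{Y}$ yields $V_\theta(X^*_T)\ge x_0+\tfrac{1}{2\theta}\E[\tilde M^2]-\tfrac{1}{2\theta}$, and equality holds because $Y=\tilde M\in\mathcal{Y}$ turns both inequalities above into equalities (the first since $(X^*_T-\kappa)^+$ vanishes off $\{\tilde M=0\}$). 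Together with the upper bound this establishes the value formula and the optimality of $X^*_T$; the stated dual relationship is just $\theta(X^*_T-\kappa)^-=\tilde M$ rewritten, and the equivalent formulation follows from $X\wedge\kappa=\kappa-(X-\kappa)^-$ together with $\E[(X^*_T-\kappa)^-]=\tfrac{1}{\theta}\E[\tilde M]=\tfrac{1}{\theta}$. Running the same computation for an \emph{arbitrary} optimiser also shows that any optimal wealth must satisfy the dual relationship: optimality forces $\tilde M$ to be the (unique, by strict convexity of $Y\mapsto\E[XY+\tfrac{1}{2\theta}Y^2]$) minimiser of the inner problem for that wealth, and the Kuhn--Tucker form of that minimiser is exactly $\theta(\,\cdot-\kappa)^-$.

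The remaining step, which I expect to be the main obstacle, is to produce $X^*_T\in\X(x_0)$ with $\theta(X^*_T-\kappa)^-=\tilde M$. Writing $X^*_T=\kappa-\tilde M/\theta+r$ with $r\ge 0$ and $r\tilde M=0$, admissibility $X^*_T-x_0\in K_2$ is, via the duality \eqref{eq incom dual relation}, equivalent to $\E\big[\big(\tfrac{1}{\theta}\E[\tilde M^2]-\tfrac{1}{\theta}\tilde M+r\big)g\big]=0$ for all $g\in\M^{2,s}$; the component of this identity in the direction $\tilde M$ holds automatically (using $r\tilde M=0$ and $\E[\tilde M]=1$), so one must find $r$ in the convex cone $\{r\in L^2_+(\p):r\tilde M=0\}$ with $\E[r n]=\tfrac{1}{\theta}\E[\tilde M\, n]$ for every $n\in K_2^{\perp}$ with $\E[n]=0$. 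The natural route is a Hahn--Banach/closed-range argument: were no such $r$ to exist, a separating element would yield a direction violating the variational inequality that characterises $\tilde M$ as the $L^2$-projection of the origin onto $\M^{2,s}\cap L^2_+(\p)$; alternatively, existence of the optimiser may be inherited from the existence theory for the truncated quadratic utility maximisation problem equivalent to MMV (cf.\ \citet{cerny2020semimartingale}). As a consistency check, when $M\ge 0$ one has $\tilde M=M$, the choice $r\equiv 0$ is admissible, and $X^*_T=x_0+\E[M^2]/\theta-M/\theta$ reduces to the MV optimiser of Theorem \ref{thm mv semi}.
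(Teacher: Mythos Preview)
Your proposal is correct and follows essentially the same architecture as the paper: the upper bound by testing the inner infimum at $\tilde M\in\M^{2,s}\cap L^2_+$, and the lower bound by exhibiting an admissible $X^*_T$ with $\theta(X^*_T-\kappa)^-=\tilde M$. Your verification that such an $X^*_T$ attains the value (the completing-the-square computation over all $Y\in\mathcal{Y}$) is in fact more explicit than the paper, which only states the equivalence and moves directly to existence.

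The one place you diverge is the existence step. The paper does not attempt a direct Hahn--Banach/separation argument to produce the residual $r$; instead it introduces the truncated quadratic utility $\tilde U(x)=x\wedge\kappa-\tfrac{1}{2\kappa}(x\wedge\kappa)^2$ and invokes Theorem~4.10 of \citet{biagini2011admissible} (not \citet{cerny2020semimartingale}) for the pair $(\tilde U,S)$ to obtain an optimal terminal wealth with the required first-order relation. Your ``alternative route'' via truncated quadratic utility is therefore exactly the paper's route; your primary Hahn--Banach sketch, by contrast, is only heuristic as written---the closed-range and separation steps on the cone $\{r\in L^2_+:r\tilde M=0\}$ would need real work to be made rigorous, so it is safer to rely on the existing utility-maximisation existence theory, as the paper does.
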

	\begin{proof}
		For any $\Q$ such that $\frac{\ud \Q}{\ud\p}\in \M^{2,s}\cap  L^2_+$, we have 
		\begin{displaymath}
			V_\theta(X_T)\leq x_0+\frac{1}{2\theta}\E\left[\left(\frac{\ud \Q}{\ud\p}\right)^2\right]-\frac{1}{2\theta}.
		\end{displaymath} 
		Then,  
		\begin{displaymath}
			\begin{aligned}
				\max_{X\in\X(x_0)}V_\theta(X_T) &\leq x_0+\frac{1}{2\theta}\min_{Y\in \M^{2,s}\cap  L^2_+}\E\left[Y^2\right]-\frac{1}{2\theta} \\&=x_0+\frac{1}{2\theta}\E\left[\tilde{M}^2\right]-\frac{1}{2\theta}.
			\end{aligned}
		\end{displaymath}
		
		If we can find an admissible $X^*\in\X(x_0)$ such that $V_\theta(X_T^*)=x_0+\frac{1}{2\theta}\E\left[\tilde{M}^2\right]-\frac{1}{2\theta}$, then the inequality holds. It is equivalent to find an admissible $X^*\in\X(x_0)$ such that 
		\begin{displaymath}
			X_T^*\wedge \kappa-\E\left[X_T^*\wedge \kappa\right]=\frac{1}{\theta}-\frac{\tilde{M}}{\theta},
		\end{displaymath} 
		where 
		\begin{displaymath}
			\E\left[\left(X_T^*-\kappa\right)^-\right]=\frac{1}{\theta}.
		\end{displaymath}
		
		Let $\kappa=x_0+\frac{\E\left[\tilde{M}^2\right]}{\theta}$. Consider a truncated utility function 
		\begin{equation}
			\tilde{U}(x)=\left\{\begin{aligned}
				&x-\frac{1}{2\kappa}x^2,\ & x \leq \kappa, \\
				&\frac{\kappa}{2},\ & \text{else}, 
			\end{aligned}\right.\notag
		\end{equation}
		or equivalently $\tilde{U}(x)=x\wedge\kappa-\frac{1}{2\kappa}(x\wedge\kappa)^2$. Using Theorem 4.10 in \citet{biagini2011admissible} on $\tilde{U}$ and $S$, we can get a claim that satisfies all the requirements.
	\end{proof}
	
	\begin{remark}
		While $M$ is the variance optimal signed martingale measure, $\Tilde{M}$ is the variance optimal absolute continuous martingale measure of this semimartingale market. That is the key difference between them, which also corresponds to the essential difference between MV and MMV. Under MMV, the investor takes the non-negative $\Tilde{M}$ as the pricing kernel and therefore avoids the non-monotonicity problem. The (in)consistency of $M$ and $\Tilde{M}$ depends on the market setting. When the market is complete or asset prices are continuous, $M$ and $\Tilde{M}$ are actually the same. Otherwise, they could be different, see the example given in Subsection \ref{subsec example}.
	\end{remark}
	
	\subsection{Example} \label{subsec example}
	
	In this subsection, we give an example to show the validity and practicability of our method.
	
	We consider an incomplete market with discontinuous sample paths used in \citet{li2024MMVPS}, which is described by the following system:
	\begin{displaymath}
		\left\{
		\begin{aligned}
			& {\mathrm{d}S_0(t)} ={S_0(t)} r \mathrm{d} t, \\
			& \mathrm{d}S_1(t) ={S_1(t^-)}\Bigg \{ \mu \mathrm{d}t + \sigma \mathrm{d} B(t) + \mathrm{d} \sum_{i = 1}^{N(t)}Q_i\Bigg  \},
		\end{aligned}
		\right.\end{displaymath}
	and the dynamic of the wealth process corresponding to a self-financed strategy $\pi$ is 
	\begin{displaymath}
		\begin{aligned}
			\mathrm{d} X^{\pi} (t) 
			 = &\Big[X^{\pi} (t)r + \pi(t)(\mu-r + \lambda \xi_1) \Big]\mathrm{d}t + \pi(t)\sigma\mathrm{d}B(t) \\&+ \pi(t^-) \int_{-1}^{\infty} q \widetilde{N}(\mathrm{d}t, \mathrm{d}q),
		\end{aligned}
	\end{displaymath}
	where $L(t)=\sum_{i = 1}^{N(t)}Q_i$ is a compound Poisson process and $\tilde{N}$ is the corresponding compensated random measure. See \citet{li2024MMVPS} for more details about the market model.
	
	Taking the dynamic programming method to get the HJBI equation of the value function, \citet{li2024MMVPS} solve out the explicit optimal strategy and value function of the MMV problem. They compare the optimal strategies and value functions of MV and MMV problems and then give an equivalency condition on the consistency of two problems. However, based on the general framework proposed by this paper, we only need the solution of the MV problem to give the same equivalency condition. 
	
	Referring to \citet{li2024MMVPS}, the optimally invested wealth under MV is
	\begin{displaymath}
		X^{mv,\ast}(T)  = e^{Tr}x_0 + \frac{e^{TC}}{\gamma} 
		- \frac{1}{\gamma}  M(T),
	\end{displaymath} 
	where 
	\begin{displaymath}
		\begin{aligned}
			M(s)=\mathcal{E} \Bigg( &-\int_0^. \frac{\left(\mu - r + \lambda \xi_1\right)\sigma}{\sigma^2 + \lambda \xi_2^2} \mathrm{d}B(\tau) \\&- \int_0^.\int_{-1}^{\infty} \frac{\left(\mu - r + \lambda \xi_1\right)q}{\sigma^2 + \lambda \xi_2^2} \widetilde{N}(\mathrm{d}\tau,\mathrm{d}q) \Bigg)_s
		\end{aligned}
	\end{displaymath}
	is a Dol\'eans-Dade exponential satisfying 
	\begin{displaymath}
		\begin{aligned}
			\ud M(s)&=M(s^-)\Bigg( - \frac{\left(\mu - r + \lambda \xi_1\right)\sigma}{\sigma^2 + \lambda \xi_2^2} \mathrm{d}B(\tau) \\&- \int_{-1}^{\infty} \frac{\left(\mu - r + \lambda \xi_1\right)q}{\sigma^2 + \lambda \xi_2^2} \widetilde{N}(\mathrm{d}\tau,\mathrm{d}q) \Bigg),\\ M(0)&=1.
		\end{aligned}
	\end{displaymath}
	
	Simple calculation yields that $\E\left[X^\pi(T)M(T)\right]=e^{Tr}x_0$ for all the admissible $X^\pi(T)$ and then $M(T)$ is a discounting measure in $\M(e^{Tr}x_0)$ defined in Subsection \ref{subsec general}. Applying Theorem \ref{thm general}, we know that MV and MMV are consistent if and only if $M(T)\geq 0$, which is equivalent to $\Delta L(t)\leq \frac{\sigma^2 + \lambda \xi_2^2}{\mu - r + \lambda \xi_1}=\bar{q}$. Such result is the same as Theorem 7 in \citet{li2024MMVPS}.
	
	Moreover, as \citet{li2024MMVPS} obtain the explicit solution of MMV problem, it can be verified that Theorem \ref{thm semi mmv} holds true for the financial market with jumps satisfying $\Delta L(t)> \frac{\sigma^2 + \lambda \xi_2^2}{\mu - r + \lambda \xi_1}=\bar{q}$, which results in the inconsistency of MV and MMV. Therefore, this example shows the application of the martingale method and verifies the validity of our result.

	\section{Conclusion} \label{sec conclusion}
	
	In this paper, we provide a new perspective based on the martingale method to discuss the relationship between MV and MMV portfolio selection problems. We first prove the consistency of two problems in any complete market. Then we propose a unified framework to discuss the relationship in general financial markets without any special market setting or completeness requirement. We apply this framework to a semimartingale market and find that solutions to MV and MMV problems coincide if and only if the VSMM keeps non-negative. We also derive the solution to MMV problem when it differs with MV problem. We verify that taking a signed measure to discount the investor's wealth process is the fundamental reason for the non-monotonicity of MV. And MMV fixes such problem by taking an adjusted and non-signed measure to discount the wealth process. Finally, we provide a nontrivial example to show the validity and practicability of our method. We believe that the martingale method and the analysis framework we propose in this paper can be universally applied to most of other MV and MMV problems with different market settings.

 \section{Acknowledgments}
The authors acknowledge the support from the National Natural Science Foundation of China (Grant No.12271290, and No.11871036). The authors also thank the members of the group of Actuarial Sciences and Mathematical Finance at the Department of Mathematical Sciences, Tsinghua University for their feedback and useful conversations.

	\bibliographystyle{elsarticle-num-names}
	\bibliography{ORL-MMV}
	
\end{document}